\documentclass[reqno]{amsart}
\usepackage{amsmath}
\usepackage{amssymb}
\usepackage{amscd}
\usepackage{mathptmx}
\usepackage{amssymb}
\usepackage{amsfonts}
\usepackage{amsmath}
\usepackage{graphicx}

\begin{document}
\title {Extended Armendariz Rings}

\author{Nazim Agayev}
\address{Nazim Agayev,  Qafqaz University, Department of Pedagogy,
Baku, Azerbaijan} \email{nazimagayev@qafqaz.edu.az}

\author{Abdullah Harmanci}
\address{Abdullah Harmanci, Hacettepe University, Department of Mathematics, Ankara Turkey}
\email{harmanci@hacettepe.edu.tr}

\author{Sait Halicioglu}
\address{Sait Hal\i c\i oglu,  Department of Mathematics, Ankara University, 06100 Ankara, Turkey}
\email{halici@ankara.edu.tr}

\date{}
\maketitle
\newtheorem {thm}{Theorem}[section]
\newtheorem{lem}[thm]{Lemma}
\newtheorem{prop}[thm]{Proposition}
\newtheorem{cor}[thm]{Corollary}
\newtheorem{df}[thm]{Definition}
\newtheorem{nota}{Notation}
\newtheorem{note}[thm]{Remark}
\newtheorem{ex}[thm]{Example}
\newtheorem{exs}[thm]{Examples}
\newtheorem{rmk}[thm]{Remark}
\newtheorem{quo}[thm]{Question}

\begin{abstract} In this note we introduce  central linear
Armendariz rings as a generalization of  Armendariz rings  and
investigate their properties.\\

\noindent {\bf AMS Subject Classification:} \,16U80

\noindent {\bf Key words:} reduced rings, central reduced rings,
abelian rings, Armendariz rings, linear Armendariz rings, central
linear Armendariz rings.
\end{abstract}

\section{ Introduction } Throughout this paper $R$ denotes an associative ring with
identity. Rege and Chhawchharia \cite{RC},  introduce the notion
of an Armendariz ring. The ring $R$ is called {\it Armendariz~}
if for any $f(x)=\sum_{i=0}^n a_ix^i$,  $g(x)=\sum_{j=0}^s b_jx^j
\in R[x]$, $f(x)g(x)=0$ implies $a_ib_j=0$ for all $i$ and  $j$.
The name of the ring was given due to Armendariz who proved that
reduced rings (i.e. rings without nonzero nilpotent elements)
satisfied this condition \cite{AE}.

Number of papers have been written on the Armendariz rings (see,
e.g.  \cite{AC1}, \cite{KLO}). So far,  Armendariz rings are
generalized in different ways (see namely, \cite{HKKwak},
\cite{LZh}). In particular, Lee and Wong \cite{LW} introduced {\it
weak Armendariz rings} (i.e. if the product of two linear
polynomials in $R[X]$ is $ 0$, then each product of their
coefficients is 0), Liu and Zhao \cite{LZh} introduce also {\it
weak Armendariz rings} ( if the product of two polynomials in
$R[X]$ is $ 0$, then each product of their coefficients is
nilpotent) as another generalization of Armendariz rings. To get
rid of confusion, we call the rings {\it linear Armendariz} which
satisfy Lee and Wong condition.
 A ring $R$ is called {\it central linear Armendariz},  if the product of two linear polynomials in $R[X]$ is $ 0$, then each product of their
coefficients is central.  Clearly, Armendariz rings are  linear
Armendariz and linear Armendariz rings are central linear
Armendariz. In case $R$ is reduced ring every weak Armendariz ring
is central linear Armendariz. We supply some examples to show that
the converses of these statements need not be  true in general. We
prove that the class of central linear Armendariz rings ýÜülies
strictly between classes of linear Armendariz rings and abelian
rings.    For a ring $R$, it is shown that  the polynomial ring
$R[x]$ is central linear Armendariz if and only if the Laurent
polynomial ring $R[x, x^{-1}]$ is central linear Armendariz. Among
others we also show that  $R$ is reduced ring if and only if the
matrix ring $T_{n}^{k}(R)$ is Armendariz ring if and only if   the
matrix ring $T_{n}^{n-2}(R)$ is central  linear Armendariz ring,
for a natural number $n \geq 3$ and  $k=[n/2]$.  And for an ideal
$I$ of $R$, if $R/I$ central linear Armendariz and $I$ is reduced,
then $R$ is central linear Armendariz.

We also introduce central reduced rings as a generalization of
reduced  rings. The ring $R$ is called {\it central reduced} if
every nilpotent is central. We prove that if $R$ is central
reduced ring,  then $R$ is  central linear Armendariz, and if $R$
is central reduced ring, then the trivial extension $T(R,R)$ is
central linear Armendariz.  Moreover, it is proven that if $R$ is
a semiprime ring, then $R$ is central reduced ring   if and only
if   $R[x]/(x^{n})$ is central linear  Armendariz,  where $n\geq
2~$ is a natural number and $(x^n)$ is the ideal generated by
$x^n$.

 \indent\indent   We write $R[x],R[[x]],R[x,x^{-1}]$ and $R[[x,x^{-1}]]$ for the polynomial ring, the power series
ring, the Laurent polynomial ring and the Laurent power series
ring over $R$, respectively.

\section{Central Linear Armendariz Rings}
In this section central linear Armendariz rings are introduced as
a generalization of linear Armendariz rings. We prove that some
results of linear Armendariz rings can be extended to  central
linear Armendariz rings for this general settings. Clearly, every
Armendariz ring is linear Armendariz. However, linear  Armendariz
rings are not necessarily Armendariz in general (see \cite[Example
3.2 ]{LW}).

We now give a possible generalization of linear Armendariz rings.

\begin{df} The ring $R$ is called  {\it central linear Armendariz} if the product
of two linear polynomials in $R[X]$ is $ 0$, then each product of
their coefficients is central.
\end{df}

Note that all commutative rings, reduced rings, Armendariz rings
and linear Armendariz rings  are central linear Armendariz. It is
clear that subrings of central linear Armendariz rings are central
linear Armendariz.

Recall that $R$ is said to be {\it abelian} if idempotent elements
of $R$ are central.

\begin{lem}\label{lem1} If the ring $R$ is central linear Armendariz, then  $R$ is abelian.
\end{lem}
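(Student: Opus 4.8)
The plan is to show that any idempotent $e \in R$ is central by exploiting the central linear Armendariz condition. First I would take an arbitrary idempotent $e$ and an arbitrary element $r \in R$, and manufacture from them two linear polynomials whose product is zero. The standard trick in the Armendariz literature is to consider the element $e r (1-e)$, which satisfies $(e r (1-e))^2 = 0$ since the middle factor $(1-e)e = 0$. The goal is to build linear polynomials out of $e$, $r$, and $1-e$ so that I can apply the definition and force $er(1-e)$ to be central, and likewise $(1-e)re$ to be central.

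The key computation is to find the right polynomials. A natural choice is $f(x) = e + (1-e)rx$ and $g(x) = (1-e) - (1-e)rx$, or some close variant; one checks directly that $f(x)g(x) = 0$ by verifying that the constant term, the $x^1$ coefficient, and the $x^2$ coefficient all vanish using $e^2 = e$ and $e(1-e) = (1-e)e = 0$. Then the central linear Armendariz hypothesis applies to the coefficients $a_0 = e$, $a_1 = (1-e)r$ of $f$ and $b_0 = 1-e$, $b_1 = -(1-e)r$ of $g$, yielding that each product $a_i b_j$ is central. In particular the product $a_0 b_0 = e(1-e) = 0$ is trivially central, but the informative products are those involving $r$; by selecting the polynomials so that a product such as $e r (1-e)$ (or its counterpart) appears among the $a_i b_j$, I conclude it is central. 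A symmetric choice of polynomials handles $(1-e)re$.

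Once I know that $c := er(1-e)$ is central for every $r$, I would finish by the usual argument: since $c$ is central, $c = ec = c e$, but $c = er(1-e)$ already gives $ec = er(1-e) = c$ while $ce = er(1-e)e = 0$. Centrality forces $c = ce = 0$, so $er(1-e) = 0$ for all $r$. The symmetric argument gives $(1-e)re = 0$. Setting $r = 1$ (or combining the two identities) yields $er = ere = re$, which is exactly the statement that $e$ is central, and hence $R$ is abelian.

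The main obstacle I expect is pinning down the exact pair of linear polynomials whose product vanishes and whose coefficient products surface the element $er(1-e)$ in a useful way — the bookkeeping of which $a_i b_j$ to track, and ensuring the chosen $f,g$ genuinely multiply to zero rather than merely having a vanishing leading or constant term. The conceptual content is light once the polynomials are correct; the risk is purely in choosing them so that centrality of a \emph{nonzero-looking} product $er(1-e)$ is what the hypothesis delivers, after which the centrality-forces-vanishing step is routine.
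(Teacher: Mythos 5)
Your overall strategy is exactly the paper's: build two linear polynomials from $e$, $r$, $1-e$ with product zero, let the hypothesis force $er(1-e)$ to be central, deduce $er(1-e)=0$ (and symmetrically $(1-e)re=0$), and conclude $e$ is central. Your endgame is also correct: with $c=er(1-e)$ central, $ec=c$ and $ce=0$ give $c=ec=ce=0$. But the one step carrying all the content of the lemma --- exhibiting the polynomials --- is left unresolved, and the concrete candidate you offer fails. With $f(x)=e+(1-e)rx$ and $g(x)=(1-e)-(1-e)rx$ one computes
\[
f(x)g(x)=e(1-e)+\bigl(-e(1-e)r+(1-e)r(1-e)\bigr)x-(1-e)r(1-e)rx^{2}
=(1-e)r(1-e)\,x-(1-e)r(1-e)r\,x^{2},
\]
which is not zero in general, so the central linear Armendariz hypothesis never applies. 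Worse, even if the product did vanish, the coefficient products of your $f,g$ are $0$, $0$, $(1-e)r(1-e)$ and $-(1-e)r(1-e)r$; the target element $er(1-e)$ does not occur among the $a_ib_j$ at all, so this choice could never deliver what your third paragraph needs. You flagged this bookkeeping as the expected obstacle, but as written the argument does not go through.

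The fix, which is the paper's choice, is to put the square-zero element $er(1-e)$ itself into the linear coefficients: take $f(x)=e-er(1-e)x$ and $g(x)=(1-e)+er(1-e)x$. Then the constant term is $e(1-e)=0$, the $x$-coefficient is $e\cdot er(1-e)-er(1-e)(1-e)=er(1-e)-er(1-e)=0$, and the $x^{2}$-coefficient is $-er(1-e)\cdot er(1-e)=0$ since $(1-e)e=0$; now $a_0b_1=er(1-e)$ is among the coefficient products, so the hypothesis makes it central and your concluding paragraph finishes the proof. The symmetric pair $h(x)=(1-e)-(1-e)rex$, $t(x)=e+(1-e)rex$ handles $(1-e)re$. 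One minor further slip: setting $r=1$ in $er=ere$ yields nothing; it is the combination of $er=ere$ (from $er(1-e)=0$) with $re=ere$ (from $(1-e)re=0$) that gives $er=re$, as you also note parenthetically.
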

\begin{proof} Let $e$ be any idempotent in $R$, consider $f(x)=e-er(1-e)x, g(x)=(1-e)+er(1-e)x \in R[x]$
 for any $r\in R$. Then $f(x)g(x)=0$. By hypothesis, in particular $er(1-e)$ is central. Therefore
$er(1-e)=0$. Hence $er=ere$ for all $r\in R$. Similarly we
consider $h(x)=(1-e)-(1-e)rex$ and $t(x)=e+(1-e)rex$ in $R[x]$ for
any $r\in R$. Then $h(x)t(x)=0$. As before $(1-e)re=0$ and
$ere=re$ for all $r\in R$. It follows that $e$ is central element
of $R$, that is, $R$ is abelian.
\end{proof}

\begin{ex}   Let $R$ be any ring. For any integer $n\geq 2$, consider the ring $R^{n\times n}$ of $n\times n$ matrices  and the
ring $T_n(R)$ of $n\times n$ upper triangular matrices  over $R$.
The rings $R^{n\times n}$ and $T_n(R)$ contain non-central
idempotents. Therefore they are not abelian. By Lemma \ref{lem1}
these rings are not central linear Armendariz.
\end{ex}

Recall that a ring $R$ is {\it semicommutative}, if for any $a,b
\in R$, $ab=0$ implies $aRb=0$.

\begin{thm}\label{von} Let $R$ be a von Neumann regular ring $R$. Then the following are equivalent:\\
$(1)$ $R$ is Armendariz.\\
$(2)$ $R$ is reduced.\\
$(3)$ $R$ is central linear Armendariz.\\
$(4)$ $R$ is linear Armendariz.\\
$(5)$ $R$ is semicommutative.
\end{thm}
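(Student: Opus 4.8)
The plan is to funnel all five conditions through reducedness $(2)$, using as the single nontrivial ingredient the fact that a von Neumann regular ring that is abelian must be reduced. Everything else is either a cited theorem or a short formal manipulation.

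I would first dispatch the forward arrows. The implication $(2)\Rightarrow(1)$ is Armendariz's theorem \cite{AE} that reduced rings are Armendariz, while $(1)\Rightarrow(4)$ and $(4)\Rightarrow(3)$ hold in every ring, as already observed after the definition of central linear Armendariz. For $(2)\Rightarrow(5)$, in a reduced ring $ab=0$ forces $(ba)^2=b(ab)a=0$ and hence $ba=0$; then for every $r$ we get $(arb)^2=ar(ba)rb=0$, so $arb=0$, which is exactly semicommutativity.

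The substance of the argument is the two return arrows $(3)\Rightarrow(2)$ and $(5)\Rightarrow(2)$, and both factor through abelianness. By Lemma~\ref{lem1} a central linear Armendariz ring is abelian, and a semicommutative ring is abelian as well: if $e=e^2$ then $e(1-e)=0=(1-e)e$, so semicommutativity gives $er(1-e)=0=(1-e)re$ for all $r$, whence $er=ere=re$. Thus it suffices to prove the key claim that a von Neumann regular abelian ring $R$ is reduced.

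To prove the key claim it is enough to show that $a^2=0$ implies $a=0$, since a square-zero-free ring is reduced (for a nilpotent $b$ with minimal index $b^n=0$, $n\geq 2$, the element $b^{\lceil n/2\rceil}$ squares to zero, forcing a smaller index). So let $a^2=0$ and pick, by regularity, an $x$ with $a=axa$. Then $e=ax$ is idempotent because $(ax)(ax)=(axa)x=ax$, and one computes $ea=axa=a$ while $ae=a^2x=0$; since $R$ is abelian, $e$ is central, so $a=ea=ae=0$. Assembling $(2)\Rightarrow(1)\Rightarrow(4)\Rightarrow(3)\Rightarrow(2)$ together with $(2)\Leftrightarrow(5)$ then yields the equivalence of all five statements. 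I expect the only real obstacle to be isolating the correct idempotent $e=ax$ and exploiting its centrality to collapse $a$ to zero; the rest is bookkeeping.
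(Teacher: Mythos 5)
Your proof is correct, and it takes a genuinely more self-contained route than the paper's. The paper disposes of $(3)\Rightarrow(2)$ by combining Lemma~\ref{lem1} with a citation to Goodearl's book (abelian regular rings are reduced), proves $(5)\Rightarrow(2)$ by a separate regularity argument --- if $a^2=0$ then semicommutativity gives $(aR)^2=0$, and a nonzero right ideal of a von Neumann regular ring would contain a nonzero idempotent, a contradiction --- and then delegates the remaining equivalences (notably the Armendariz conditions) to Anderson--Camillo \cite[Theorem 6]{AC1}. You instead unify both return arrows through a single key lemma, proving directly that an abelian regular ring is reduced via the idempotent $e=ax$ (with $a=axa$, $a^2=0$, so $ea=a$, $ae=a^2x=0$, and centrality of $e$ forces $a=0$), and you observe that semicommutative rings are abelian so that $(5)\Rightarrow(2)$ needs no new idea. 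Your computations all check: the reduction from square-zero-free to reduced via $b^{\lceil n/2\rceil}$, the verification that reduced implies semicommutative ($ba=0$ from $(ba)^2=0$, then $(arb)^2=ar(ba)rb=0$), and semicommutative implies abelian from $e(1-e)=0=(1-e)e$. What your approach buys is transparency and minimal external dependence --- the only citation is \cite{AE} for reduced implies Armendariz, and the cycle $(2)\Rightarrow(1)\Rightarrow(4)\Rightarrow(3)\Rightarrow(2)$ with $(2)\Leftrightarrow(5)$ does cover all five conditions; what the paper's version buys is brevity, at the cost of leaning on two nontrivial references where your argument is elementary throughout.
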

\begin{proof} By Lemma \ref{lem1} and \cite[Lemma 3.1, Theorem 3.2]{Go},  we have $(3)\Rightarrow(2)$.
$(2)\Rightarrow(5)$ Clear.  $(5)\Rightarrow(2)$ Let $a^2=0$ for
$a\in R$. By $(5)$, $aRa=0$. So $(aR)^2=0$. Assume  $aR\neq 0$. By
hypothesis, $aR$ contains a non-zero idempotent. This is a
contradiction. Hence $a=0$. The rest is clear from \cite[Theorem
6]{AC1}.
 \end{proof}

We now give a condition for a ring to be central linear Armendariz
relating to central idempotents.

\begin{lem}\label{iki} Let $R$ be a ring and $e$ an idempotent of $R$. If $e$ is a central idempotent of $R$, then the
following are equivalent: \\ {\rm (1)} $R$ is central linear
Armendariz.\\ {\rm (2)} $eR$ and $(1-e)R$ are central linear
Armendariz.
\end{lem}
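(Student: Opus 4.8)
The plan is to use the central idempotent $e$ to split $R$ as a direct product of rings and then reduce the statement to a general fact about direct products. Since $e$ is central, $eR=Re$ and $(1-e)R=R(1-e)$ are two-sided ideals that are themselves rings with identities $e$ and $1-e$, and $r\mapsto(er,(1-e)r)$ is a ring isomorphism $R\cong eR\times(1-e)R$. This induces $R[x]\cong(eR)[x]\times((1-e)R)[x]$, under which a linear polynomial $a_0+a_1x$ corresponds to the pair $(ea_0+ea_1x,\ (1-e)a_0+(1-e)a_1x)$ and polynomial multiplication is carried out coordinatewise. Thus the lemma is equivalent to the assertion that a direct product $A\times B$ of two rings is central linear Armendariz if and only if each of $A$ and $B$ is, and I would prove it in this form.

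The one structural fact I would record first is that an element $(u,v)$ is central in $A\times B$ exactly when $u$ is central in $A$ and $v$ is central in $B$, since multiplication in the product is coordinatewise. For the implication that both factors being central linear Armendariz forces the product to be, I would take linear polynomials $f=(a_0,b_0)+(a_1,b_1)x$ and $g=(c_0,d_0)+(c_1,d_1)x$ in $(A\times B)[x]$ with $fg=0$; coordinatewise this reads $(a_0+a_1x)(c_0+c_1x)=0$ in $A[x]$ and $(b_0+b_1x)(d_0+d_1x)=0$ in $B[x]$. Applying the hypothesis in each factor makes every $a_ic_j$ central in $A$ and every $b_id_j$ central in $B$, whence each coefficient product $(a_ic_j,b_id_j)$ is central in $A\times B$. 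For the converse, given linear $p=a_0+a_1x$ and $q=c_0+c_1x$ in $A[x]$ with $pq=0$, I would lift them to $(a_0,0)+(a_1,0)x$ and $(c_0,0)+(c_1,0)x$ in $(A\times B)[x]$; their product is $(pq,0)=0$, so centrality of each $(a_ic_j,0)$ in $A\times B$ yields centrality of $a_ic_j$ in $A$, and symmetrically for $B$.

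The step that genuinely needs the hypothesis, and which I regard as the main point to get right rather than a real obstacle, is reconciling the two notions of centrality under the decomposition: an element of $eR$ is central in $R$ if and only if it is central in the ring $eR$, and this uses centrality of $e$ essentially. Indeed, for $a\in eR$ one has $ea=ae=a$, so $(1-e)a=a(1-e)=0$, and since $1-e$ is central this gives $a(1-e)r=0$ and $(1-e)ra=r(1-e)a=0$ for every $r\in R$; hence $ar=a(er)$ and $ra=(er)a$, so $a$ commutes with all of $R$ precisely when it commutes with all of $eR$. Without $e$ central neither the ring direct product decomposition nor this equivalence of centrality holds, so this is exactly where the hypothesis is consumed; everything else is the routine coordinatewise bookkeeping indicated above.
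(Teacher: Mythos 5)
Your proposal is correct and is essentially the paper's own argument in different packaging: the paper likewise splits everything along the Peirce decomposition $R = eR \oplus (1-e)R$, checking $ef(x)\cdot eg(x)=0$ in $(eR)[x]$ and $(1-e)f(x)\cdot(1-e)g(x)=0$ in $((1-e)R)[x]$ and recombining $a_ib_j = ea_ib_j + (1-e)a_ib_j$, while your rephrasing via the isomorphism $R\cong eR\times(1-e)R$ and a general direct-product lemma is the same computation stated externally. Your direction $(1)\Rightarrow(2)$ by lifting with zero in the complementary coordinate also matches the paper's one-line appeal to the fact that subrings inherit the central linear Armendariz property, and your explicit reconciliation of centrality in $eR$ versus in $R$ is a worthwhile detail the paper leaves implicit.
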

\begin{proof} (1) $\Rightarrow$ (2) Since the subrings of central linear Armendariz rings are central linear Armendariz, $(2)$ holds. \\
(2) $\Rightarrow $ (1) Let $f(x)=a_0 + a_1 x$,  $g(x)=b_0+b_1 x$
be non zero polynomials in $R[x]$. Assume that $f(x)g(x)=0$. Let
$f_1=ef(x)$, $f_2=(1-e)f(x)$, $g_1=eg(x)$, $g_2=(1-e)g(x)$. Then
$f_1(x)g_1(x)=0$ in $(eR)[x]$ and $f_2(x)g_2(x)=0$ in
$((1-e)R)[x]$. By (2) $ea_ieb_j$ is central in $eR$ and
$(1-e)a_i(1-e)b_j$ is central in $(1-e)R$ for all $0\leq i\leq 1$,
$0\leq j\leq 1$.   Since $e$ and $1-e$ central in $R$, $R=
eR\oplus (1-e)R$ and so $a_ib_j=ea_ib_j + (1-e)a_ib_j$ is central
in $R$ for all $0\leq i\leq 1$, $0\leq j\leq 1$. Then $R$ is
central linear Armendariz.
\end{proof}

Clearly, any linear Armendariz ring is central linear Armendariz.
We now prove that the converse is true if the ring is right
$p.p.-$ring.

\begin{thm}\label{thm2}  If the ring $R$ is linear Armendariz, then  $R$ is central linear Armendariz. The converse holds if $R$ is right  $p.p.-$ring.
\end{thm}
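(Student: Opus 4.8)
The plan is to prove the two directions of this theorem. The first statement—that linear Armendariz implies central linear Armendariz—is immediate from the definitions, since "each product of coefficients is zero" trivially implies "each product of coefficients is central." I would dispose of this in one sentence.

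For the substantive converse, I need to recall what a right $p.p.$-ring is: every principal right ideal is projective, equivalently, the right annihilator of any element is generated by an idempotent. So for each $a \in R$ there is an idempotent $e$ with $\mathrm{r}(a) = eR$. The plan is to start with two linear polynomials $f(x) = a_0 + a_1 x$ and $g(x) = b_0 + b_1 x$ with $f(x)g(x) = 0$, which unpacks into the three coefficient equations $a_0 b_0 = 0$, $a_0 b_1 + a_1 b_0 = 0$, and $a_1 b_1 = 0$. Under the central linear Armendariz hypothesis I know each of $a_0 b_0$, $a_0 b_1$, $a_1 b_0$, $a_1 b_1$ is central; my goal is to upgrade "central" to "zero." The key leverage I expect to use is Lemma~\ref{lem1}: a central linear Armendariz ring is abelian, so in particular all idempotents are central.

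The central technical step is to use the $p.p.$ condition to extract idempotents annihilating the coefficients and then exploit abelianness. Concretely, from $a_0 b_0 = 0$ and $a_1 b_1 = 0$ I get idempotents $e_0, e_1$ with $b_0 \in e_0 R$-type annihilator relations, i.e. $a_0 e = a_0$-style identities, and similarly for $a_1$. Since $R$ is abelian these idempotents are central, so they commute with everything and I can multiply the middle equation $a_0 b_1 + a_1 b_0 = 0$ on appropriate sides by these central idempotents to separate the two summands. The idea is that multiplying $a_0 b_1 + a_1 b_0 = 0$ by the central idempotent $e$ associated to $a_0$'s annihilator (so that $ea_0 = 0$ while preserving the other term, or the dual) isolates one product and forces it to vanish; iterating the same trick on both sides kills $a_0 b_1$ and $a_1 b_0$ individually. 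Once the cross terms are zero the original equations already give $a_0 b_0 = a_1 b_1 = 0$, so all four products vanish, which is exactly the linear Armendariz conclusion.

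The main obstacle I anticipate is the bookkeeping in the middle step: getting the right idempotents from the correct one-sided annihilators and verifying that multiplying by them genuinely annihilates one summand of $a_0 b_1 + a_1 b_0 = 0$ without re-mixing the terms. The cleanness here depends crucially on centrality of the idempotents (guaranteed by Lemma~\ref{lem1}), so I would state that reduction first and then carry out the annihilator manipulation; I would also double-check whether I need the right $p.p.$ hypothesis applied to $a_i$, to $b_j$, or to both, since the asymmetry of "right" $p.p.$ must be matched against which side the products appear on. I expect that with the abelian property in hand the argument parallels the standard proof that right $p.p.$ reduced-type rings are Armendariz, specialized to the linear (degree one) case.
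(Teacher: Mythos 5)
Your proposal is correct and takes essentially the same route as the paper: the forward direction is dismissed as immediate, and your converse argument---take the idempotent $e_0$ with $r(a_0)=e_0R$ from the right $p.p.$ hypothesis, invoke Lemma~\ref{lem1} to conclude $R$ is abelian so $e_0$ is central, then multiply $a_0b_1+a_1b_0=0$ by $e_0$ to isolate and kill the cross terms---is exactly the paper's proof of Theorem~\ref{thm2}. The only slip is cosmetic: the relevant identity is $a_0e_0=0$ rather than ``$a_0e=a_0$,'' which your own parenthetical ($ea_0=0$, harmless since $e_0$ is central) already corrects.
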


\begin{proof}  Suppose $R$ is central linear Armendariz and right $p.p.-$ring.  Let
$f(x)=a_0 + a_1 x$,  $g(x)=b_0+b_1 x ~\in R[x]$. Assume
$f(x)g(x)=0$ Then we have:
\[
\begin{array}{lll}
 a_0b_0&=0&\hspace{0.8in}(1)\\
a_0b_1+a_1b_0&=0&\hspace{0.8in}(2)\\
a_1b_1&=0&\hspace{0.8in}(3)
\end{array}
\]
By hypothesis there exist  idempotents $e_i\in R$ such that
$r(a_i)=e_iR$ for all $i$. So  $b_0=e_0b_0$ and $a_0e_0=0$.
Multiply (2) from the right by $e_0$,  by Lemma \ref{lem1}, $R$ is
abelian and we have
$0=a_0b_1e_0+a_1b_0e_0=a_0e_0b_1+a_1b_0e_0=a_1b_0$. So $a_0b_1=0$.
Hence $R$ is linear Armendariz. This completes the proof.
\end{proof}

Let $R$ be a ring and let $M$ be an $(R, R)$-bimodule. The {\it
trivial extension} of $R$ by $M$ is defined to be the ring $T(R,M)
= R \oplus M$ with the usual addition and the multiplication
$(r_1,m_1)(r_2,m_2) = (r_1r_2,r_1m_2 + m_1r_2)$.

Example \ref{ters} shows that the assumption "right p.p.-ring" in
Theorem \ref{thm2} is not superfluous.

\begin{ex}\label{ters} There exists a central linear Armendariz ring which is neither right p.p.-ring nor linear Armendariz ring.
\end{ex}
\begin{proof} Let $n$ be an integer with $n \geq 2$. Consider the ring $R=T({\bf Z}_{2^n}, {\bf Z}_{2^n})$. If  $a=2^{n-1}$ and
$ f(x) =\left [
 \begin{array}{cc}
\bar{a} & \bar{0}\\
 \bar{0} & \bar{a} \end{array}\right] + \left [
 \begin{array}{cc}
\bar{a} & \bar{1}\\
 \bar{0} & \bar{a} \end{array}\right]x \in R[x], $
then $(f(x))^2=0$. Because $\left [
 \begin{array}{cc}
\bar{a} & \bar{0}\\
 \bar{0} & \bar{a} \end{array}\right]\left [
 \begin{array}{cc}
\bar{a} & \bar{1}\\
 \bar{0} & \bar{a} \end{array}\right] \neq 0$,  $R$ is not a  linear Armendariz ring. Since $R$ is commutative,  it is central linear Armendariz ring.
Moreover, since the principal ideal  $I=\left [
\begin{array}{cc}
0 & {\bf Z}_{2^n}\\
0 & 0 \end{array}\right]= \left [
\begin{array}{cc}
0 & 1\\
0 & 0 \end{array}\right]R$ is not projective, $R$ is not right
p.p.-ring.
\end{proof}

Now we will introduce a notation for some subrings of $T_n(R)$.
Let $k$ be a natural number smaller than $n$. Say \\ \\ \indent
$T_{n}^{k}(R)=\left
\{\displaystyle{\sum_{i=j}^{n}}\displaystyle{\sum_{j=1}^{k}}a_{j}e_{(i-j+1)i}+\sum_{i=j}^{n-k}\sum_{j=1}^{n-k}r_{ij}e_{j(k+i)}:
a_{j}, r_{ij}\in R\right \}$

\noindent where $e_{ij}$' s are matrix units. Elements of
$T_{n}^{k}(R)$
are in the form \\

\noindent
$$\left[\begin{array}{cccccccc}  x_{1} & x_{2} & ... & x_{k} &  a_{1(k+1)} &
a_{1(k+2)} & ...
& a_{1n}  \\
0 & x_{1} & ... & x_{k-1} & x_{k} &  a_{2(k+2)} &... & a_{2n}\\0&
0 & x_{1}& ... &  & & &
a_{3n}  \\
& & & ... &   &   &   & \\   &   &   &   &   &   &  & x_{1}
\end{array}\right]$$
  where $x_{i}, a_{js}\in R, $ $1\leq i\leq k, $ $1\leq j\leq n-k$
and  $k+1\leq s\leq n$.

For a reduced ring $R$, our aim is to investigate necessary and
sufficent conditions for $S=T_{n}^{k}(R)$ to be central linear
Armendariz. In \cite{LZ}, Lee  and Zhou prove that, if $R$ is
reduced ring, then $S$ is Armendariz ring for $k=[n/2]$. Hence $S$
is linear Armendariz and so  $S$ is central linear Armendariz. In
the following,
 we show that the converse of
this theorem is also true. Moreover,  it is proven that $R$ is
reduced ring if and only if  $T_{n}^{k}(R)$ is  Armendariz ring if
and only if $T_{n}^{n-2}(R)$ is  central linear Armendariz ring.
In this direction, we need the following lemma:

\begin{lem}\label{lemred} Suppose that  there exist $a,b\in R$ such
that $a^{2}=b^{2}=0$ and $ab=ba$ is not central. Then $R$ is not a
central linear Armendariz ring.
\end{lem}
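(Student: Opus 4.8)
The plan is to prove the statement directly by negating the defining condition: rather than assume $R$ is central linear Armendariz, I will exhibit two linear polynomials in $R[x]$ whose product is $0$ but one of whose coefficient products fails to be central, which is exactly what it means for $R$ to violate the definition. Since the hypothesis supplies elements $a,b$ with $a^2=b^2=0$ and $ab=ba$, the idea is to assemble the two factors out of $a$ and $b$ so that the middle (cross) terms cancel against each other by commutativity while the outer (squared) terms vanish by nilpotency.

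Concretely, I would take $f(x)=a-bx$ and $g(x)=a+bx$ in $R[x]$. Expanding gives $f(x)g(x)=a^2+(ab-ba)x-b^2x^2$: the constant term is $0$ because $a^2=0$, the $x^2$-term is $0$ because $b^2=0$, and the middle term is $0$ because $ab=ba$. Hence $f(x)g(x)=0$, so these two linear polynomials satisfy the left-hand side of the central linear Armendariz condition.

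Writing $f(x)=a_0+a_1x$ and $g(x)=b_0+b_1x$ with $a_0=a$, $a_1=-b$, $b_0=a$, $b_1=b$, the coefficient product $a_0b_1=ab$ is precisely the element assumed not to be central. Thus $f(x)g(x)=0$ while a product of their coefficients is non-central, contradicting the definition; therefore $R$ is not central linear Armendariz.

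I do not anticipate any genuine obstacle: the entire argument reduces to choosing the correct pairing of $a$ and $b$ in the two factors. The only point deserving care is verifying that the non-central element $ab$ actually surfaces among the four coefficient products $a_ib_j$, and the choice above secures this via $a_0b_1=ab$ (and likewise $a_1b_0=-ba=-ab$), so once the polynomials are written down the verification is entirely routine.
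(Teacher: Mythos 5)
Your proof is correct and is essentially the paper's own argument: the paper verifies $(a+bx)(a-bx)=0$ and observes $ab$ is non-central, while you use the sign-swapped factorization $(a-bx)(a+bx)=0$, which is the same computation. The only difference is that you spell out the expansion and explicitly identify $ab$ among the coefficient products $a_ib_j$, which the paper leaves implicit.
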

\begin{proof} $(a+bx)(a-bx)=0$ in $R[x]$, but $ab$ is not central.
So, $R$ is not a central linear Armendariz ring.
\end{proof}

\begin{thm}\label{thmred} Let  $n\geq 3$ be a natural number. Then $R$ is reduced ring if and only if
$T_{n}^{k}(R)$ is central linear Armendariz ring, where $1\leq
k\leq n-2$.
\end{thm}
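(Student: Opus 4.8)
The plan is to prove the two implications separately, writing $S=T_n^k(R)$. For the reverse implication it is convenient to recall that $R$ is reduced if and only if it has no nonzero square-zero element (if $a^m=0$ with $m\ge 2$ minimal, then $a^{m-1}$ squares to zero), so it suffices to produce a contradiction from a single $0\ne a\in R$ with $a^2=0$.

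For the direction ``$S$ central linear Armendariz $\Rightarrow R$ reduced'', suppose $R$ is not reduced and pick $0\ne a\in R$ with $a^2=0$. I would set $A=aI_n$ and $B=e_{1,n-1}$. Both lie in $S$: the scalar matrix $aI_n$ has constant diagonals ($a$ on the main diagonal, $0$ elsewhere), and the hypothesis $k\le n-2$ gives $(n-1)-1=n-2\ge k$, so the position $(1,n-1)$ lies in the free region and $e_{1,n-1}\in S$. One checks at once that $A^2=a^2I_n=0$, that $B^2=e_{1,n-1}e_{1,n-1}=0$ (here $n\ge 3$), and that $AB=BA=a\,e_{1,n-1}$ irrespective of whether $a$ is central. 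It remains to see that $a\,e_{1,n-1}$ is not central in $S$: taking $N=\sum_{i=1}^{n-1}e_{i,i+1}\in S$, one computes $(a\,e_{1,n-1})N=a\,e_{1,n}$ while $N(a\,e_{1,n-1})=0$, so the commutator $a\,e_{1,n}$ is nonzero. Thus $A,B$ satisfy $A^2=B^2=0$, $AB=BA$ and $AB$ not central, and Lemma~\ref{lemred} shows $S$ is not central linear Armendariz, a contradiction. This step uses $k\le n-2$ in an essential way.

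For the direction ``$R$ reduced $\Rightarrow S$ central linear Armendariz'' I would first reduce to the case $k=1$: since $k\ge 1$ gives the subring inclusion $T_n^k(R)\subseteq T_n^1(R)$, and subrings of central linear Armendariz rings are again central linear Armendariz, it is enough to treat $S=T_n^1(R)$. Let $f(x)=A_0+A_1x$ and $g(x)=B_0+B_1x$ in $S[x]$ with $f(x)g(x)=0$, giving $A_0B_0=0$, $A_0B_1+A_1B_0=0$ and $A_1B_1=0$; since $A_0B_0=A_1B_1=0$ only the cross terms are in question, and as $A_0B_1=-A_1B_0$ it suffices to prove $A_0B_1\in Z(S)$. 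The next step is to compute the center, which for $1\le k\le n-2$ should be $Z(S)=\{\,cI_n+r\,e_{1n}:c,r\in Z(R)\,\}$; I would verify this by testing a prospective central element against the matrix units available in the free region and against the matrix $N$ above, exactly as in the non-centrality computation of the previous paragraph.

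Granting this, the reading of $A_0B_1$ on the main diagonal is $a_0b_1$, where $a_i,b_j\in R$ denote the constant diagonal entries of $A_i,B_j$; the three displayed relations restrict on the diagonal to $(a_0+a_1x)(b_0+b_1x)=0$ in $R[x]$, and since a reduced ring is Armendariz \cite{AE} we get $a_0b_1=a_1b_0=0$, so $A_0B_1$ is strictly upper triangular. The main obstacle is the remaining claim that every entry of $A_0B_1$ off the $(1,n)$ corner vanishes and that the corner entry lies in $Z(R)$; here I would argue offset by offset, combining the three relations entrywise with the reversibility and semicommutativity of the reduced ring $R$ (so that partial products can be reordered and killed), much in the spirit of the argument of Lee and Zhou \cite{LZ} for $k=[n/2]$. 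Once $A_0B_1$ is shown to equal $r\,e_{1n}$ with $r\in Z(R)$, it lies in $Z(S)$ and the proof is complete. I expect this entrywise analysis, rather than either of the two conceptual reductions, to be the technical heart of the argument.
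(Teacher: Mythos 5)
Your reverse direction is correct and is essentially the paper's argument: the paper also takes $A=aI_n$ with $a^2=0$ and a square-zero matrix supported in the free corner (there $B=e_{1(k+1)}+\dots+e_{1n}$, with non-centrality witnessed by a shift matrix), and feeds them into Lemma~\ref{lemred}; your choice $B=e_{1,n-1}$, $N=\sum_{i=1}^{n-1}e_{i,i+1}$ is a harmless simplification. The genuine gap is in the forward direction, and it is twofold. First, you never actually prove it: the ``offset by offset'' entrywise analysis that you defer as ``the technical heart'' is the entire content of that implication. Second, and fatally, your reduction goes the wrong way. The inclusion $T_n^k(R)\subseteq T_n^1(R)$ reduces the theorem to the \emph{strongest} statement in the family, namely that $T_n^1(R)$ is central linear Armendariz for every reduced $R$ --- and that statement is false for $n\geq 4$. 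The only available positive result, which is what the paper's proof invokes, is Lee--Zhou's theorem \cite{LZ} that $T_n^{k}(R)$ is Armendariz for reduced $R$ when $k=[n/2]$; passing to subrings from it covers only $k\geq[n/2]$, the opposite direction from your reduction.

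Concretely, let $R=k\langle s,t\rangle$ be the free algebra over a field (a domain, hence reduced, with center $k$), and work in $S=T_4^1(R)$. Put $A_0=s\,e_{12}-st\,e_{13}$, $A_1=s\,e_{12}-st^2e_{13}$, $B_0=ts\,e_{24}+s\,e_{34}$, $B_1=t^2s\,e_{24}+s\,e_{34}$, all in $S$ (zero constant diagonal). Each product is supported on $e_{14}$: $(A_iB_j)_{14}=(A_i)_{12}(B_j)_{24}+(A_i)_{13}(B_j)_{34}$, so $A_0B_0=(sts-sts)e_{14}=0$, $A_1B_1=(st^2s-st^2s)e_{14}=0$, $A_0B_1=(st^2s-sts)e_{14}=-A_1B_0$; hence $(A_0+A_1x)(B_0+B_1x)=0$ in $S[x]$. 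But $r=st^2s-sts\notin Z(R)$ (e.g.\ $rt\neq tr$, as these are sums of distinct words), so $A_0B_1=r\,e_{14}$ fails to commute with $tI_4\in S$ and is not central in $S$. Thus $T_4^1(R)$ is not central linear Armendariz over a reduced ring, so the claim you defer --- that the $(1,n)$ corner entry must land in $Z(R)$ --- is false for $k=1$, $n\geq4$; this is the central-linear analogue of Kim--Lee's observation \cite{KLO} that $T_4^1(R)$ is not Armendariz over reduced rings. (The same example shows that the theorem's range $1\leq k\leq n-2$ cannot be attacked below $k=[n/2]$ by any entrywise argument, and that the paper's blanket citation of \cite{LZ} for all such $k$ only legitimately yields $k\geq[n/2]$; your write-up should therefore route the forward direction through the Lee--Zhou theorem and the subring closure property, not through $k=1$.)
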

\begin{proof} Let $R$ be a reduced ring. In \cite{LZ}, it is shown that
 $T_{n}^{k}(R)$ is Armendariz ring and so it is central linear Armendariz. Conversely,
suppose that $R$ is not a reduced ring. Choose a nonzero element
$a\in R$ with square zero. Then for elements
$A=a(e_{11}+e_{22}+...+e_{nn}),
B=e_{1(k+1)}+e_{1(k+2)}+...+e_{1n}$ in $T_{n}^{k}(R)$,
$A^{2}=B^{2}=0$ and $AB=BA$ is not central, since
$(AB)(e_{1(n-k)}+e_{2(n-k+1)}+...+e_{k(n-1)}+e_{(k+1)n})=ae_{1n}\neq
0$. Therefore, from Lemma \ref{lemred}, $T_{n}^{k}(R)$ is not
central linear Armendariz ring. This completes the proof.
\end{proof}

\begin{thm}\label{thmdort}Let $R$ be a ring,  $n \geq 3$ be a natural number and  $k=[n/2]$.  Then the following are equivalent:\\
$(1)$ $R$ is reduced ring.\\
$(2)$ $T_{n}^{k}(R)$ is Armendariz ring.\\
$(3)$ $T_{n}^{n-2}(R)$ is central linear Armendariz ring.
\end{thm}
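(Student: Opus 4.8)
The plan is to establish the three-way equivalence by chaining together results already proved in the excerpt, so that the real work reduces to a single genuinely new implication. I would prove the cycle $(1)\Rightarrow(2)\Rightarrow(3)\Rightarrow(1)$. The first implication $(1)\Rightarrow(2)$ is immediate from Lee and Zhou's theorem \cite{LZ}: if $R$ is reduced then $T_n^{k}(R)$ is Armendariz for $k=[n/2]$. The implication $(2)\Rightarrow(3)$ should follow because any Armendariz ring is linear Armendariz and hence central linear Armendariz; the only point requiring care is that the index changes from $k=[n/2]$ in statement (2) to $n-2$ in statement (3), so these are a priori \emph{different} rings and I cannot pass directly from one to the other.

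The cleanest way to resolve this index mismatch is to route everything through Theorem \ref{thmred}, which handles all indices $1\leq k\leq n-2$ uniformly. Concretely, for the step $(3)\Rightarrow(1)$ I would invoke Theorem \ref{thmred} with the specific choice $k=n-2$: since $n\geq 3$ guarantees $1\leq n-2\leq n-2$, that theorem tells us directly that $T_n^{n-2}(R)$ is central linear Armendariz if and only if $R$ is reduced. This already gives $(3)\Rightarrow(1)$ for free. For $(1)\Rightarrow(3)$, I would again apply Theorem \ref{thmred} with $k=n-2$ in the forward direction, obtaining that $R$ reduced implies $T_n^{n-2}(R)$ is central linear Armendariz. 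Thus statements $(1)$ and $(3)$ are equivalent purely by specializing the earlier theorem, with no new computation at all.

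It then remains to link $(2)$ into the cycle. Here I would argue $(1)\Leftrightarrow(2)$ separately: the forward direction is Lee and Zhou \cite{LZ} with $k=[n/2]$, and for the converse $(2)\Rightarrow(1)$ I would observe that if $T_n^{[n/2]}(R)$ is Armendariz then in particular it is central linear Armendariz, and since $1\leq [n/2]\leq n-2$ holds precisely when $n\geq 3$ (one should check the boundary case $n=3$, where $[n/2]=1=n-2$, and $n=4$, where $[n/2]=2=n-2$), Theorem \ref{thmred} applied with this value of $k$ forces $R$ to be reduced. Combining $(1)\Leftrightarrow(2)$ and $(1)\Leftrightarrow(3)$ closes the equivalence.

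The step I expect to be the main obstacle is the verification that the index $k=[n/2]$ actually lies in the admissible range $1\leq k\leq n-2$ for \emph{every} $n\geq 3$, since Theorem \ref{thmred} is only available on that range; this inequality $[n/2]\leq n-2$ fails for small $n$ and must be checked to hold exactly from $n\geq 3$ onward, and the two smallest cases $n=3,4$ (where $[n/2]$ happens to equal $n-2$) deserve an explicit sanity check. Once that bookkeeping is confirmed, the proof is essentially a formal assembly of Theorem \ref{thmred}, the cited result \cite{LZ}, and the elementary fact that Armendariz $\Rightarrow$ linear Armendariz $\Rightarrow$ central linear Armendariz.
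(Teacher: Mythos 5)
Your proof is correct, but it routes around the one step the paper takes head-on. The paper proves the cycle $(1)\Rightarrow(2)\Rightarrow(3)\Rightarrow(1)$: the first implication is \cite{LZ} and the last is Theorem \ref{thmred} with $k=n-2$, exactly as in your argument, but for $(2)\Rightarrow(3)$ the paper passes \emph{directly} between the two matrix rings --- the passage you declared impossible. Since $[n/2]\leq n-2$ for all $n\geq 3$, a matrix whose first $n-2$ diagonals are constant in particular has its first $[n/2]$ diagonals constant, so $T_{n}^{n-2}(R)$ is a subring of $T_{n}^{[n/2]}(R)$; as subrings of Armendariz rings are Armendariz, and Armendariz implies central linear Armendariz, $(3)$ follows from $(2)$ with no further appeal to Theorem \ref{thmred}. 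Your alternative --- making $(1)$ the hub and invoking Theorem \ref{thmred} twice, once at $k=[n/2]$ for $(2)\Rightarrow(1)$ and once at $k=n-2$ for $(1)\Leftrightarrow(3)$ --- is equally valid and has the advantage of requiring no structural comparison of the rings $T_{n}^{k}(R)$ for different $k$; the price is a second use of the nontrivial converse direction of Theorem \ref{thmred}, where the paper needs it only once. Note that both routes ultimately rest on the same bookkeeping you flagged: the inequality $1\leq [n/2]\leq n-2$ for $n\geq 3$ (with equality $[n/2]=n-2$ precisely at $n=3,4$) is what legitimizes your application of Theorem \ref{thmred} at $k=[n/2]$, and it is also exactly what makes the paper's subring containment hold; your verification of it is correct.
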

\begin{proof} $(1) \Rightarrow (2)$ See \cite{LZ}.\\
$(2) \Rightarrow (3)$ Since subrings of Armendariz rings are Armendariz, the rest is clear.\\
$(3) \Rightarrow (1)$ It follows from Theorem \ref{thmred}.
\end{proof}

Note that the homomorphic image of a central linear Armendariz
ring need not be central linear Armendariz. If $R$ is commutative
and Gaussian ring, by \cite[Theorem 8]{AC1}  every homomorphic
image of $R$ is Armendariz and so
 it is central linear Armendariz.

In \cite{HLS}, it was shown that for a ring $R$, if $I$ is a
reduced ideal of $R$ such that $R/I$ is Armendariz, then $R$ is
Armendariz. For central linear Armendariz rings we have the
similar result.

\begin{thm}\label{Ired}Let $R/I$ be central linear Armendariz and  I be reduced.
 Then R is central linear Armendariz.

\end{thm}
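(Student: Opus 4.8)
The plan is to verify the defining condition directly. Take linear polynomials $f(x)=a_0+a_1x$ and $g(x)=b_0+b_1x$ in $R[x]$ with $f(x)g(x)=0$, which yields the three coefficient relations $a_0b_0=0$, $a_0b_1+a_1b_0=0$ and $a_1b_1=0$. Since $a_0b_0=0$ and $a_1b_1=0$ are already central, and since $a_0b_1=-a_1b_0$, the whole problem collapses to a single task: proving that $c:=a_0b_1$ is central in $R$ (then $a_1b_0=-c$ is central as well). So from the outset I would reduce everything to showing that one fixed element $c$ is central.

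Next I would transfer the hypothesis through the quotient. Writing $\overline{x}$ for the image of $x$ in $R/I$, the relation $\overline{f}(x)\,\overline{g}(x)=0$ holds in $(R/I)[x]$, so the central linear Armendariz property of $R/I$ forces $\overline{a_0b_1}=\overline{c}$ to be central in $R/I$. Equivalently, the commutator $u_r:=cr-rc$ lies in $I$ for every $r\in R$. Thus centrality of $c$ modulo $I$ comes for free; what remains is to upgrade \emph{central modulo $I$} to \emph{central}, i.e. to prove $u_r=0$ for all $r$.

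This upgrade is the heart of the matter and the step I expect to be the main obstacle. The mechanism I would use is that $I$ is reduced, hence contains no nonzero nilpotent element, so it suffices to show that the commutator $u_r\in I$ is nilpotent. Before doing so I would record the standard facts about a reduced ideal that make such arguments work: for $a\in I$ and $b\in R$ one has $ab=0\iff ba=0$ (indeed $ba\in I$ and $(ba)^2=b(ab)a=0$), and $I$ is semicommutative, i.e. $ab=0$ with $a\in I$ implies $arb=0$ for all $r\in R$. The delicate point is that $c$ itself need not lie in $I$, so these facts cannot be applied to $c$ directly; they must be applied to $u_r$, which does lie in $I$. Combining the annihilator relations coming from $a_0b_0=0$ and $a_1b_1=0$ with the reversal and semicommutativity facts above, the goal is to show that $u_r$ is nilpotent (for instance that $u_r^2=0$); reducedness of $I$ then forces $u_r=0$, so $c$ is central.

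Once $c=a_0b_1$ is shown to be central, the remaining products $a_1b_0=-c$, $a_0b_0=0$ and $a_1b_1=0$ are all central, so every product of a coefficient of $f$ with a coefficient of $g$ is central, and $R$ is central linear Armendariz. The only genuinely nontrivial ingredient is the nilpotency of $u_r$; everything else is bookkeeping resting on the reduction of the first paragraph and on the reduced-ideal facts recorded above.
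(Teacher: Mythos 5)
Your overall frame coincides with the paper's: pass to $R/I$ to get that the commutator $u_r=cr-rc$ lies in $I$, then use reducedness of $I$ to force $u_r=0$ by showing it is nilpotent. But the proposal stops exactly at the step you yourself call ``the heart of the matter'': you never exhibit the computation showing $u_r$ nilpotent, and the ingredients you list would not produce it. First, your two recorded lemmas (reversal $ab=0\iff ba=0$ and semicommutativity) both require one factor to lie in $I$, so they do not even apply to the relations $a_0b_0=0$ and $a_1b_1=0$, whose factors need not lie in $I$. The correct reduced-ideal facts, for arbitrary $a,b\in R$ with $ab=0$, are $bIa=0$ (since $(bIa)^2\subseteq bI(ab)Ia=0$ and $bIa\subseteq I$) and then $aIb=0$ (since $(aIb)^3\subseteq aI(bIa)Ib=0$). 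Second, and more seriously, you propose to derive nilpotency of $u_r$ from ``the annihilator relations coming from $a_0b_0=0$ and $a_1b_1=0$'' alone; these give only $a_0Ib_0=b_0Ia_0=0$ and $a_1Ib_1=b_1Ia_1=0$, which say nothing about the mixed pairs $(a_0,b_1)$ and $(a_1,b_0)$ --- yet your $c$ is precisely the cross product $a_0b_1$.

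The missing idea is that the middle coefficient relation $a_0b_1+a_1b_0=0$ must itself be multiplied into $I$. The paper multiplies it on the right by $Ib_0$: since $a_0b_1Ib_0\subseteq a_0Ib_0=0$, one gets $a_1b_0Ib_0=0$, whence $(b_0Ia_1)^3\subseteq b_0I(a_1b_0Ia_1b_0)Ia_1=0$, so $b_0Ia_1=0$ and then $a_1Ib_0=0$; multiplying on the left by $a_0I$ gives symmetrically $a_0Ia_0b_1=0$, hence $(b_1Ia_0)^3=0$, so $b_1Ia_0=0=a_0Ib_1$. Only with $b_jIa_i=0$ for all $i,j$ in hand does the commutator die: every term in the expansion of $u_rIu_r$ contains a factor of the form $b_jI'a_i$ with $I'\subseteq I$ (e.g.\ $b_j(rI)a_i\subseteq b_jIa_i=0$), so $u_rIu_r=0$, and since $u_r\in I$ one gets $u_r^3\in u_rIu_r=0$, whence $u_r=0$ by reducedness. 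Note also that your hoped-for $u_r^2=0$ is not available on this route: $u_r^2$ involves products $b_jra_i$ with $r\in R$ arbitrary, about which nothing is known; the cube exploiting $u_r\in I$ is what works. Your opening reduction to the single element $c=a_0b_1$ via $a_1b_0=-c$ is a legitimate small economy over the paper (which verifies each $a_ib_j$ separately), but it saves no real work, since establishing $b_1Ia_0=0$ already requires the full use of the middle relation.
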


\begin{proof}
Let $a,b \in R$. If $ab=0$, then $(bIa)^{2}=0$. Since
$bIa\subseteq I $ and $I$ is reduced, $bIa=0$. Also,
$(aIb)^{3}\subseteq (aIb)(I)(aIb)=0$. Therefore $aIb=0$. Assume
$f(x)=a_{0}+a_{1}x, g(x)=b_{0}+b_{1}x\in R[x] $ and $f(x)g(x)=0$.
Then

\[
\begin{array}{lll}
 a_0b_0&=0&\hspace{0.8in}(1)\\
a_0b_1+a_1b_0&=0&\hspace{0.8in}(2)\\
a_1b_1&=0&\hspace{0.8in}(3)\\
\end{array}
\]
\\ We first show that for any  $a_{i}b_{j}$,  $a_{i}Ib_{j}=b_{j}Ia_{i}=0$. Multiply $(2)$
from the right by $Ib_{0}$, we have $a_{1}b_{0}Ib_{0}=0$, since
$a_{0}b_{1}Ib_{0}=0$. Then $(b_{0}Ia_{1})^{3}\subseteq
b_{0}I(a_{1}b_{0}Ia_{1}b_{0})Ia_{1}=0$. Hence $b_{0}Ia_{1}=0$.
This implies $a_{1}Ib_{0}=0$. Multiply $(2)$ from the  left by
$a_{0}I$, we have  $a_{0}Ia_0b_1+a_{0}Ia_1b_0=0$ and so
$a_{0}Ia_0b_1=0$. Thus $(b_{1}Ia_{0})^{3}=0$ and $b_{1}Ia_{0}=0$.
Therefore $a_{0}Ib_{1}=0$.
 Since
$R/I$ is central Armendariz, it follows that
$\overline{a_{i}}\overline{b_{j}}$ is central in $R/I$. So
$a_{i}b_{j}r-ra_{i}b_{j}\in I$ for any $r\in R$. Now from above
results, it can be easily seen that
$(a_{i}b_{j}r-ra_{i}b_{j})I(a_{i}b_{j}r-ra_{i}b_{j})=0$. Then
$a_{i}b_{j}r=ra_{i}b_{j}$ for all $r\in R$. Hence $a_{i}b_{j}$ is
central for all $i$ and $j$. This completes the proof.

\end{proof}

Let $S$ denote a multiplicatively closed subset of $R$ consisting
of central regular elements. Let $S^{-1}R$ be the localization of
$R$ at $S$. Then we have:
\begin{prop}\label{ilkp} $R$ is central linear Armendariz if and only if $S^{-1}R$
is central linear Armendariz.
\end{prop}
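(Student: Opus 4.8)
The plan is to treat the two directions separately, with the converse being essentially immediate and the forward direction requiring a denominator-clearing argument. For the converse, note that since every element of $S$ is regular, the canonical map $R \to S^{-1}R$ sending $r$ to $1^{-1}r$ is injective, so $R$ is (isomorphic to) a subring of $S^{-1}R$. As already recorded in the paper, subrings of central linear Armendariz rings are again central linear Armendariz; hence if $S^{-1}R$ is central linear Armendariz, so is $R$.

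For the forward direction, suppose $R$ is central linear Armendariz and take linear polynomials $f(y) = \alpha_0 + \alpha_1 y$ and $g(y) = \beta_0 + \beta_1 y$ in $(S^{-1}R)[y]$ with $f(y)g(y) = 0$. First I would bring the coefficients of $f$ to a common denominator $s \in S$ and those of $g$ to a common denominator $t \in S$; because $S$ is multiplicatively closed and consists of central elements, this is possible, so I may write $\alpha_i = s^{-1}a_i$ and $\beta_j = t^{-1}b_j$ with $a_i, b_j \in R$. Since $s, t$ are central, $s^{-1}, t^{-1}$ are central in $S^{-1}R$, and the relation $f(y)g(y)=0$ becomes $s^{-1}t^{-1}(a_0+a_1y)(b_0+b_1y)=0$; multiplying on the left by the central unit $st$ yields $(a_0+a_1y)(b_0+b_1y)=0$ in $(S^{-1}R)[y]$.

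Next I would push this relation down to $R$. Comparing coefficients gives $a_0b_0 = 0$, $a_0b_1 + a_1b_0 = 0$, and $a_1b_1 = 0$ as equalities in $S^{-1}R$; since the map $R \to S^{-1}R$ is injective, these same equalities hold in $R$, so $(a_0+a_1y)(b_0+b_1y)=0$ in $R[y]$. The central linear Armendariz hypothesis on $R$ then forces each $a_ib_j$ to be central in $R$.

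Finally, I would transfer centrality back up. A routine check shows that any element central in $R$ remains central in $S^{-1}R$ (again using that denominators are central), so each $a_ib_j$ is central in $S^{-1}R$; combined with the centrality of $s^{-1}$ and $t^{-1}$, the product $\alpha_i\beta_j = s^{-1}t^{-1}a_ib_j$ is a product of central elements and hence central in $S^{-1}R$. This shows $S^{-1}R$ is central linear Armendariz. I do not anticipate any genuine obstacle; the only points demanding care are the choice of common central denominators, the use of injectivity of the localization map (which rests precisely on $S$ consisting of regular elements) to descend the coefficient relations to $R[y]$, and the observation that elements central in $R$ stay central after localizing.
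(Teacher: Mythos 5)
Your proposal is correct and takes essentially the same route as the paper: clear denominators (using that $S$ is central and multiplicatively closed) to reduce the vanishing product to polynomials over $R$, apply the central linear Armendariz hypothesis there, and lift centrality back to $S^{-1}R$, with the converse coming from the embedding $R\hookrightarrow S^{-1}R$. The only cosmetic difference is that you descend the coefficient relations to $R$ via injectivity before invoking the hypothesis, while the paper applies the hypothesis to the cleared coefficients $(a_ic_i)(b_jd_j)$ and then cancels the central regular factors $c_i,d_j$.
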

\begin{proof} Suppose that $R$ is a central linear  Armendariz ring. Let
$f(x)=\displaystyle{\sum_{i=0}^{1}}(a_{i}/s_i)x^{i}$,
$g(x)=\displaystyle{\sum_{j=0}^{1}}(b_{j}/t_j)x^{j} ~\in
(S^{-1}R)[x]$ and $f(x) g(x)=0$. Then we may find $u$, $v$, $c_i$
and $d_j$ in $S$ such that
$uf(x)=\displaystyle{\sum_{i=0}^{1}}a_{i}c_ix^{i}\in R[x]$,
$vg(x)=\displaystyle{\sum_{i=0}^{1}}b_{j}d_jx^{j}\in R[x]$ and
$(uf(x))(vg(x))=0$. By supposition $(a_{i}c_i)(b_{j}d_j)$ are
central in $R$ for all $i$ and $j$. Since $c_i$ and $d_j$ are
regular central elements of $R$, $a_ib_j$ are central in $R$ for
all $i$ and $j$. It follows that $(a_i/s_i)(b_j/t_j)$ are central
for all $i$ and $j$.
\\Conversely, assume that $S^{-1}R$ is a central linear Armendariz ring. Let
$f(x)=\displaystyle{\sum_{i=0}^{1}}a_{i}x^{i},
g(x)=\displaystyle{\sum_{j=0}^{1}}b_{j}x^{j} ~\in R[x]$.  Assume
$f(x)g(x)=0$. Then
$f(x)/1=\displaystyle{\sum_{i=0}^{1}}(a_{i}/1)x^{i},
g(x)=\displaystyle{\sum_{j=0}^{1}}(b_{j}/1)x^{j} ~\in S^{-1}R[x]$
and $(f(x)/1)(g(x)/1)=0$ in $S^{-1}R$. By assumption
$(a_i/1)(b_j/1)$ is central in $S^{-1}R$. Hence, for all $i$ and
$j$, $a_ib_j$ is central in $R$.
\end{proof}

\begin{cor}\label{cor1} For any ring $R$, the polynomial ring $R[x]$ is
central linear Armendariz if and only if the Laurent polynomial
ring $R[x, x^{-1}]$ is central linear Armendariz.
\end{cor}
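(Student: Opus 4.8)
The plan is to recognize this as a direct application of Proposition \ref{ilkp}, with $R[x]$ playing the role of the base ring. First I would set $S = \{1, x, x^2, \ldots\}$, the set of nonnegative powers of the indeterminate inside $R[x]$. This set is obviously multiplicatively closed, so the two things I must check are that every element of $S$ is central in $R[x]$ and that every element of $S$ is regular in $R[x]$. Centrality is immediate: in the polynomial ring $R[x]$ the indeterminate $x$ commutes with every coefficient from $R$ and with itself, so each $x^n$ lies in the center of $R[x]$. Regularity follows because multiplication by $x$ merely shifts the coefficient sequence of a polynomial up by one degree, so $x^n f = 0$ forces $f = 0$ for any $f \in R[x]$; hence each $x^n$ is a (central) regular element.

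Next I would identify the localization explicitly. By construction, inverting all the nonnegative powers of $x$ is exactly the operation that produces the Laurent polynomials, so $S^{-1}(R[x]) = R[x, x^{-1}]$. With $R[x]$ as the base ring and $S$ as a multiplicatively closed subset of central regular elements, the hypotheses of Proposition \ref{ilkp} are met, and that proposition applies verbatim: it gives that $R[x]$ is central linear Armendariz if and only if $S^{-1}(R[x]) = R[x, x^{-1}]$ is central linear Armendariz. This is precisely the asserted equivalence.

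There is essentially no obstacle beyond the bookkeeping above, since all the real content lives in Proposition \ref{ilkp}. The only points deserving care are confirming the two hypotheses of that proposition for this particular $S$, namely centrality and regularity of each $x^n$, and correctly identifying the localization $S^{-1}(R[x])$ as the Laurent ring $R[x, x^{-1}]$ rather than any larger or smaller overring. Once these are in place the corollary follows by a single invocation of the proposition.
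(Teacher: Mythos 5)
Your proof is correct and matches the paper's argument exactly: both take $S=\{1,x,x^2,\dots\}$ as a multiplicatively closed set of central regular elements in $R[x]$, identify $S^{-1}(R[x])$ with $R[x,x^{-1}]$, and invoke Proposition \ref{ilkp}. Your version merely spells out the centrality and regularity checks that the paper leaves implicit.
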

\begin{proof} Let $S=\{1,x,x^2,x^3,x^4,...\}$. Then $S$ is a
multiplicatively closed subset of $R[x]$ consisting of central
regular elements. Then the proof follows from Proposition
\ref{ilkp}.
\end{proof}

We now define {\it central reduced rings} as a generalization of
reduced rings.

\begin{df} The ring $R$ is called  {\it central reduced ring} if every nilpotent element is central.
\end{df}

\begin{ex} All commutative rings,  all reduced rings and all strongly regular rings are central reduced.
\end{ex}

One may suspect that central reduced rings are reduced. But the
following example erases the possibility.

\begin{ex}
Let $S$ be a  commutative ring and $R=S[x]/(x^2)$. Then $R$ is
commutative ring and so  $R$ is central reduced.
 If $a=x+(x^2) \in R$, then $a^2=0$. Therefore $R$ is not  a reduced ring. \end{ex}

It is well known that if the ring $R$ is reduced, then $R$ is
linear Armendariz. In our case, we have the following:

\begin{thm}If $R$ is central reduced ring,  then $R$ is  central linear Armendariz.
\end{thm}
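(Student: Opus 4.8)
The plan is to prove the statement directly from the definition of central linear Armendariz, exploiting the hypothesis that every nilpotent element of $R$ is central. Let $f(x)=a_0+a_1x$ and $g(x)=b_0+b_1x$ be linear polynomials in $R[x]$ with $f(x)g(x)=0$. Expanding the product yields the three coefficient equations
\[
\begin{array}{lll}
 a_0b_0&=0&\hspace{0.8in}(1)\\
a_0b_1+a_1b_0&=0&\hspace{0.8in}(2)\\
a_1b_1&=0&\hspace{0.8in}(3)
\end{array}
\]
and the goal is to show that each of $a_0b_0$, $a_0b_1$, $a_1b_0$, $a_1b_1$ is central. Since $a_0b_0=a_1b_1=0$ by $(1)$ and $(3)$, these are trivially central, so the entire burden falls on the two mixed products $a_0b_1$ and $a_1b_0$, which are linked by $(2)$.

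First I would try to show that $a_0b_1$ and $a_1b_0$ are nilpotent, because then the hypothesis that $R$ is central reduced forces them to be central immediately. The natural route is to multiply the relations $(1)$, $(2)$, $(3)$ against one another to build an element that squares (or cubes) to zero. For instance, consider $(a_1b_0)^2=a_1(b_0a_1)b_0$; using $(2)$ in the form $b_0a_1=-b_0a_0b_1b_0^{-1}$-type manipulations is too optimistic since we lack inverses, so instead I would multiply $(2)$ on suitable sides by $a_1$ or $b_0$ and substitute $(1)$ and $(3)$ to collapse terms. A clean candidate is to compute $(a_1b_0)^3$ or $(a_0b_1)^3$: sandwiching the mixed product between copies of itself and inserting relation $(2)$ should, after using $a_1b_1=0$ and $a_0b_0=0$ to kill the unwanted summands, telescope to zero. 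This mirrors the cube-to-zero arguments used in Theorem \ref{Ired}.

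The main obstacle will be that $R$ need not be semicommutative or even abelian a priori from the central reduced hypothesis alone, so I cannot freely commute elements past one another while chasing the products to zero; every reduction must be justified purely by substituting $(1)$, $(2)$, $(3)$ and by invoking that already-established nilpotents are central (hence can be moved once they are recognized as nilpotent). The delicate point is the \emph{order} of operations: I expect to first establish nilpotency of one mixed product, say $a_1b_0$, then use its now-guaranteed centrality together with $(2)$ to transfer nilpotency (and thus centrality) to $a_0b_1=-a_1b_0$. Once both mixed products are shown central, combining with the trivially central $a_0b_0$ and $a_1b_1$ completes the verification that every $a_ib_j$ is central, and the proof is finished.
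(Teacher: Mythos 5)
Your plan is strategically identical to the paper's proof: extract the coefficient equations (1)--(3), note that $a_0b_0=a_1b_1=0$ are trivially central, and show the mixed products are nilpotent so that central reducedness makes them central. As submitted, though, the decisive step is asserted rather than performed: the claim that the cube ``should telescope to zero'' is the whole content of the argument. The telescoping does work, and in fact more cheaply than you predict, but it hinges on one observation you gesture at without naming: the \emph{reversed} products are square-zero, namely $(b_0a_0)^2=b_0(a_0b_0)a_0=0$ by (1) (and likewise $(b_1a_1)^2=0$ by (3)), so $b_0a_0$ and $b_1a_1$ are central. Without recognizing one of these elements no commuting move is available and the telescoping cannot start. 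With it, your own substitution $a_0b_1=-a_1b_0$ from (2) finishes in one line with a square rather than a cube:
\[
(a_1b_0)^2=(a_1b_0)(-a_0b_1)=-a_1(b_0a_0)b_1=-(b_0a_0)a_1b_1=0,
\]
using (3) at the last step; hence $a_1b_0$ is nilpotent, therefore central, and $a_0b_1=-a_1b_0$ is central as well (no separate ``transfer of nilpotency'' is needed). The paper reaches the same end by a slightly longer route: it first records $b_0a_0,b_1a_1\in C(R)$, multiplies (2) on the right by $a_0$ and uses centrality of $b_0a_0$ to get $a_0^2b_1a_0=0$, deduces $(a_0b_1a_0)^2=0$ so that $a_0b_1a_0$ is central, and only then obtains $(a_0b_1)^3=0$. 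So your outline, once the computation above is inserted, actually yields a shorter proof than the paper's; what you must add to make it a proof at all is the explicit identification of $b_0a_0$ (or $b_1a_1$) as a square-zero, hence central, element.
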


\begin{proof}Let $f(x)=a_{0}+a_{1}x$, $g(x)=b_{0}+b_{1}x \in R[x]$.  Assume
$f(x)g(x)=0$. Then we have :

\[
\begin{array}{lll}
 a_0b_0&=0&\hspace{0.8in}(1)\\
a_0b_1+a_1b_0&=0&\hspace{0.8in}(2)\\
a_1b_1&=0&\hspace{0.8in}(3)
\end{array}
\]

\noindent Since $(b_{0}a_{0})^2=0$ and $(b_{1}a_{1})^2=0$,
$b_{0}a_{0}, b_{1}a_{1}\in C(R)$, where $C(R)$ is the center of
$R$. Multiply (2) from the right by $a_0$,  we have
$a_{0}b_{1}a_{0}+a_{1}b_{0}a_{0}=0$. Thus
 $a_{0}b_{1}a_{0}+b_{0}a_{0}a_{1}=0$. Multiply last equation from the left by $a_{0}$, we have  ${a_{0}}^{2}b_{1}a_{0}=0$ and so  $(a_{0}b_{1}a_{0})^{2}=0$, that is,
$a_{0}b_{1}a_{0}\in C(R)$.  Hence $(a_{0}b_{1})^{3}=0$ and so
$a_{0}b_{1}\in C(R)$. Similarly it can be shown that
$a_{1}b_{0}\in C(R)$.
\end{proof}

Note that if $R$ is reduced ring, by \cite[Proposition 2.5]{RC}
trivial extension $T(R,R)$ is Armendariz and so it is linear
Armendariz. For central reduced rings,  we have

\begin{lem}\label{trr} If $R$ is central reduced ring, then the trivial extension $T(R,R)$ is central linear  Armendariz.
The converse holds if $R$ is semiprime.
\end{lem}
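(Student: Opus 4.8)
The plan is to work with the concrete description of the trivial extension $T(R,R)=R\oplus R$, in which an element $(u,v)$ is central precisely when $u,v\in C(R)$; this is immediate from the multiplication rule $(r_1,m_1)(r_2,m_2)=(r_1r_2,r_1m_2+m_1r_2)$. Throughout I write a pair of linear polynomials over $T(R,R)$ as $F=A_0+A_1x$, $G=B_0+B_1x$ with $A_i=(a_i,c_i)$ and $B_j=(b_j,d_j)$, so that $A_iB_j=(a_ib_j,\,a_id_j+c_ib_j)$. Proving that $T(R,R)$ is central linear Armendariz therefore amounts to showing, whenever $FG=0$, that each first coordinate $a_ib_j$ and each second coordinate $a_id_j+c_ib_j$ lies in $C(R)$.

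For the forward implication I would first record that when $R$ is central reduced the set $N=\mathrm{nil}(R)$ of nilpotent elements is a two--sided ideal contained in $C(R)$: nilpotents are central by hypothesis, commuting central nilpotents have nilpotent sums, and $ra=ar$ is nilpotent whenever $a$ is, so $N$ is an ideal; moreover $\bar R=R/N$ is reduced, since $x^2\in N$ forces $x\in N$. The key step is then to pass to this reduced quotient: the coordinatewise reduction $\pi\colon T(R,R)\to T(\bar R,\bar R)$ is a ring homomorphism whose kernel is $\{(u,v):u,v\in N\}$, and from $FG=0$ we get $\pi(F)\pi(G)=0$. Since $\bar R$ is reduced, $T(\bar R,\bar R)$ is Armendariz by \cite[Proposition 2.5]{RC}, so every $\pi(A_i)\pi(B_j)=\pi(A_iB_j)$ vanishes; hence both coordinates of each $A_iB_j$ lie in $N\subseteq C(R)$, and so $A_iB_j$ is central. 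This settles the forward direction, and the only real content is the reduction to $\bar R$ together with the reduced--ring fact about $T(\bar R,\bar R)$.

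For the converse, assume $T(R,R)$ is central linear Armendariz and $R$ is semiprime, and suppose toward a contradiction that $R$ is not reduced. Then $R$ contains a nonzero square--zero element: if $a^n=0$ with $n\geq2$ minimal, then $a^{n-1}\neq0$ while $(a^{n-1})^2=0$. Fix such an $a$ with $a\neq0$ and $a^2=0$. Here semiprimeness does the crucial work: $a$ cannot be central, for otherwise $aRa=a^2R=0$ would force $a=0$. Now set $X=(a,0)$ and $Y=(0,1)$ in $T(R,R)$; then $X^2=Y^2=0$ and $XY=YX=(0,a)$, which is not central because $a\notin C(R)$. By Lemma \ref{lemred} applied to $T(R,R)$ this contradicts $T(R,R)$ being central linear Armendariz, so $R$ is reduced and in particular central reduced. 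The main obstacle in both directions is conceptual rather than computational: recognizing that central reducedness is exactly what lets one reduce modulo $N$ to the reduced (Armendariz) case, and that semiprimeness is exactly what converts a square--zero element into a non--central one so that Lemma \ref{lemred} can be invoked.
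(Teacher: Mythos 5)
Your proof is correct, and in the forward direction it takes a genuinely different route from the paper's. The paper works directly with the coefficient equations $a_0c_0=0$, $a_0c_1+a_1c_0=0$, $a_1c_1=0$ extracted from $f_1(x)g_1(x)=0$ and grinds out centrality by hand: multiplying the middle equation on the right by $a_0$, using that square-zero elements are central, deducing $(a_0c_1)^3=0$ and hence $a_0c_1\in C(R)$, and similarly for $a_1c_0$. You instead observe that in a central reduced ring the nilpotents form a central ideal $N$ with $R/N$ reduced (all your verifications here are sound: central nilpotents commute, so sums and multiples stay nilpotent, and $x^2\in N$ forces $x\in N$), push everything through $\pi\colon T(R,R)\to T(R/N,R/N)$, and quote \cite[Proposition 2.5]{RC} --- the same fact the paper itself cites just before the lemma --- to conclude that each product $A_iB_j$ lies in $\ker\pi=N\oplus N\subseteq C(R)\oplus C(R)$ and is therefore central. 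Your route buys something real: the paper's computation only verifies centrality of the diagonal coordinates $a_ic_j$ of the products and never addresses the off-diagonal coordinates $a_id_j+b_ic_j$, which are also needed since $(u,v)$ is central in $T(R,R)$ exactly when both $u,v\in C(R)$; your quotient argument handles both coordinates simultaneously, and in fact proves the stronger statement for polynomials of arbitrary degree. The cost is reliance on the external Armendariz fact for trivial extensions of reduced rings, where the paper's argument is self-contained. In the converse the two proofs are essentially the same construction in different packaging: the paper builds explicit linear polynomials from $a^{n-1}$ and $\pm 1$, deduces $a^{n-1}\in C(R)$, and iterates $a^{n-1}=0$ down to $a=0$ via semiprimeness, while you first extract a square-zero element by minimality of $n$, use semiprimeness once to see it is non-central, and then feed $X=(a,0)$, $Y=(0,1)$ (for which indeed $X^2=Y^2=0$ and $XY=YX=(0,a)$) into Lemma \ref{lemred} for a one-shot contradiction --- slightly cleaner, and it correctly yields that $R$ is reduced, hence central reduced.
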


\begin{proof} Let $f(x)=\left [
\begin{array}{cc}
a_0 & b_0\\
0 & a_0 \end{array}\right]+ \left [
\begin{array}{cc}
a_1 & b_1\\
0 & a_1 \end{array}\right]x =\left [
\begin{array}{cc}
f_{1}(x) & f_{2}(x)\\
0 & f_{1}(x) \end{array}\right]$,\linebreak $g(x)=\left [
\begin{array}{cc}
c_0 & d_0\\
0 & c_0 \end{array}\right]+ \left [
\begin{array}{cc}
c_1 & d_1\\
0 & c_1 \end{array}\right]x=\left [
\begin{array}{cc}
g_{1}(x) & g_{2}(x)\\
0 & g_{1}(x) \end{array}\right] \in T(R,R)[x]$. If  $f(x)g(x)=0$,
then we have
 $$f(x)g(x)=\left [
\begin{array}{cc}
 f_{1}(x)g_{1}(x)& f_{1}(x)g_{2}(x)+f_{2}(x)g_{1}(x)\\
0 & f_{1}(x)g_{1}(x)
\end{array}\right]=0.$$  Hence $f_{1}(x)g_{1}(x)=0$, $f_{1}(x)g_{2}(x)+f_{2}(x)g_{1}(x)=0.$ In this case, we have

\[
\begin{array}{lll}
 a_0c_0&=0&\hspace{0.8in}(1)\\
a_0c_1+a_1c_0&=0&\hspace{0.8in}(2)\\
a_1c_1&=0&\hspace{0.8in}(3)\\
\end{array}
\]

\noindent From $(1)$ and $(3)$, $a_0c_0, a_1c_1\in C(R)$ and so
$c_{0}a_{0}, c_{1}a_{1}\in C(R)$. Multiply (2) from the right by
$a_0$,  we have  $a_{0}c_{1}a_{0}+a_{1}c_{0}a_{0}=0$. Thus
 $a_{0}c_{1}a_{0}+c_{0}a_{0}a_{1}=0$, so  ${a_{0}}^{2}c_{1}a_{0}=0$ and so  $(a_{0}c_{1}a_{0})^{2}=0$, that is,
$a_{0}c_{1}a_{0}\in C(R)$.  Hence $(a_{0}c_{1})^{3}=0$ and so
$a_{0}c_{1}\in C(R)$. Similarly it can be shown that
$a_{1}c_{0}\in C(R)$.\\
Conversely, suppose $R$ is semiprime and $S=T(R,R)$ is central linear  Armendariz. Let  $a^n=0$ with $a \in R$. Consider \\

\noindent $ f(x) =\left [
 \begin{array}{ll}
a^{n-1} & 0\\
 0 & a^{n-1} \end{array}\right] + \left [
 \begin{array}{ll}
a^{n-1} & 1\\
 0 & a^{n-1} \end{array}\right]x, \\
g(x) =\left [
 \begin{array}{ll}
a^{n-1} & 0\\
 0 & a^{n-1} \end{array}\right] + \left [
 \begin{array}{ll}
a^{n-1} & -1\\
 0 & a^{n-1} \end{array}\right]x   \in S[x]  $. Then $f(x)g(x)=0$. Hence $\left [
 \begin{array}{ll}
0 & a^{n-1} \\
 0 & 0  \end{array}\right] \in C(S)$ and so $a^{n-1} \in C(R)$. Therefore $(a^{n-1}R)^2=0$ implies $a^{n-1}=0$. Continuing in this way, we have $a=0$.
\end{proof}

In \cite[Theorem 5]{AC1},  Anderson and Camillo proved that for a
ring $R$ and \linebreak $n \geq 2$ a natural number,
$T_{n}^{n-1}(R)$ is Armendariz if and only if R is reduced. Lee
and Wong \cite[Theorem 3.1]{LW} also proved that $T_{n}^{n-1}(R)$
is linear Armendariz if and only if R is reduced.  For central
linear Armendariz rings, we have the following.

\begin{thm}\label{thm6} Let $R$ be a semiprime ring and $n\geq 2$ a natural number.
$R$ is central reduced ring if and only if  $T_{n}^{n-1}(R)$ is
central linear  Armendariz.
\end{thm}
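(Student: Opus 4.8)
The plan is to prove the two implications separately, using the semiprimeness hypothesis in each, after one simplifying observation: \emph{for a semiprime ring, the notions ``central reduced'' and ``reduced'' coincide}. One direction is trivial, since a reduced ring is vacuously central reduced. For the other, I would suppose $R$ is semiprime and central reduced and take a nonzero nilpotent $b$ of minimal index $m\geq 2$; then $c=b^{m-1}$ satisfies $c\neq 0$ and $c^{2}=0$, and being nilpotent it is central, so $cRc=c^{2}R=0$, and semiprimeness forces $c=0$, a contradiction. With this in hand the theorem becomes: for semiprime $R$, $R$ is reduced if and only if $T_{n}^{n-1}(R)$ is central linear Armendariz.

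For the forward implication I would invoke the literature directly: if $R$ is reduced then, by Lee and Wong \cite[Theorem 3.1]{LW}, $T_{n}^{n-1}(R)$ is linear Armendariz, and every linear Armendariz ring is central linear Armendariz. So this direction costs nothing beyond the reduction above.

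The converse is where the work lies. Assuming $S=T_{n}^{n-1}(R)$ is central linear Armendariz, I would argue by contradiction: if $R$ is not reduced, then (taking the top power of a minimal nilpotent, as above) there is $a\neq 0$ with $a^{2}=0$. The key step is to locate inside $S$ a commuting pair of square-zero elements whose product detects the centrality of $a$. I would take the scalar matrix $A=a(e_{11}+\cdots+e_{nn})$ and the corner unit $U=e_{1n}$; both belong to $T_{n}^{n-1}(R)$, and one checks $A^{2}=U^{2}=0$ and $AU=UA=ae_{1n}$. Lemma \ref{lemred}, applied to this commuting square-zero pair, then shows that central linear Armendarizness of $S$ forces $ae_{1n}$ to be central in $S$. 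A short computation with a general element of $T_{n}^{n-1}(R)$, whose diagonal entry ranges over all of $R$, shows that $ae_{1n}$ is central exactly when $a\in C(R)$; hence $a$ is central. Then $aRa=a^{2}R=0$, and semiprimeness gives $a=0$, the desired contradiction, so $R$ is reduced and therefore central reduced.

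The main obstacle, and the heart of the argument, is the choice of $A$ and $U$. The corner unit $e_{1n}$ is exactly the one ``free'' generator of $T_{n}^{n-1}(R)$ lying outside the Toeplitz pattern, and it is precisely the product $AU=ae_{1n}$ sitting in that corner whose centrality inside the matrix ring translates back into the centrality of $a$ in $R$. For $n=2$ this is precisely the mechanism used in the converse of Lemma \ref{trr}; the plan is to extend that idea to $T_{n}^{n-1}(R)$ through the top-right corner, and to verify only the two routine computations ($AU=UA=ae_{1n}$ and the centrality criterion for $ae_{1n}$) that make Lemma \ref{lemred} applicable.
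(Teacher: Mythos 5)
Your proposal is correct and takes essentially the same approach as the paper: the forward direction reduces ``central reduced $+$ semiprime'' to ``reduced'' exactly as the paper does and then quotes the known Armendariz/linear Armendariz result for $T_{n}^{n-1}(R)$, while your converse --- feeding the commuting square-zero pair $A=a(e_{11}+\cdots+e_{nn})$, $U=e_{1n}$ into Lemma \ref{lemred}, pulling the centrality of $ae_{1n}$ in $T_{n}^{n-1}(R)$ back to centrality of $a$ in $R$, and finishing with semiprimeness --- is precisely the ``technique of Lemma \ref{trr}'' that the paper leaves as a sketch, i.e.\ the construction of Theorem \ref{thmred} extended to $k=n-1$. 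Your write-up in fact supplies the details the paper omits (the minimal-index reduction to square-zero elements and the explicit centrality criterion for $ae_{1n}$), so it is a valid and slightly more complete rendering of the same argument.
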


\begin{proof} Suppose $R$ is central reduced ring. Let  $a^2=0$ for $a \in R$. Then $a \in C(R)$ and so $aRa=0$.
Since $R$ is semiprime, we have $a=0$. Therefore $R$ is reduced
and $T_{n}^{n-1}(R)$ is Armendariz by \cite[Theorem 5]{AC1}. Hence
$T_{n}^{n-1}(R)$  is linear Armendariz and  by Theorem \ref{thm2},
it is central linear Armendariz.  Conversely, assume that
$T_{n}^{n-1}(R)$ is central linear Armendariz. Using the similar
technique as in the proof of Lemma \ref{trr}, it can be shown that
$R$ is central reduced. \end{proof}

\end{document}